\newcommand{\cl}{\operatorname{cl}}
\newcommand{\zncyc}[1]{\left<#1 \right>}
\numberwithin{equation}{section}
\theoremstyle{plain} 
\newtheorem{thm}{Theorem}[section]
\newtheorem{cor}[thm]{Corollary}
\newtheorem{lemma}[thm]{Lemma}
\newtheorem{prop}[thm]{Proposition}
\theoremstyle{definition}
\theoremstyle{remark}
\newtheorem{example}[thm]{Example}
\title{Finite Groups with Many Involutions}	%
\author{Allan L. Edmonds}
\address{Department of Mathematics, Indiana University, Bloomington, IN 47401}
\email{edmonds@indiana.edu}
\author{Zachary B. Norwood}
\address{Department of Mathematics, University of Nebraska, Lincoln, NE 68588}
\email{znorwood@huskers.unl.edu}
\thanks{Research supported in part by NSF grants DMS 0851852 and DMS 0453309
%
}
\begin{document}
\begin{abstract}It is shown that a finite group in which more than 3/4 of the elements are involutions must be an elementary abelian $2$-group. A group in which exactly 3/4 of the elements are involutions is characterized as the direct product of the dihedral group of order 8 with an elementary abelian $2$-group.
\end{abstract}

\maketitle

\section{Introduction}

It is a standard exercise in an introductory algebra class to show that if $G$ is a group such that $x^{2} = 1$ for all $x \in G$, then $G$ is abelian. It follows that if $G$ is also finite, then it is an elementary abelian $2$-group.

We will show that in fact any finite group in which more than $3/4$ of its elements are involutions must satisfy the same conclusion, that is, it must be an elementary abelian $2$-group. Further, we will also characterize a group for which the proportion of involutions is exactly $3/4$ as the direct product of a dihedral group of order 8 with an elementary abelian 2-group.

Although none of the results in this paper depend on computer calculation, explorations using the GAP program \cite{GAP2005} were important in formulating the results.

\section{Definitions and Examples}

A good reference for basic group theory is Aschbacher's text \cite{Aschbacher2000}. In particular, Section 45 contains results on the number of involutions in a finite group and additional references to the literature.

Let $G$ denote a finite group, written multiplicatively with identity element 1. We refer to any element $x\in G$ such that $x^{2}=1$, including the identity element, as an \emph{involution}. We write $J(X)$ for the set of involutions in a subset $X$ of a group, and write $j(X)$ for its cardinality $|J(X)|$. We also find it convenient to define the invariant $\alpha(G)=j(G)/|G|$ representing the proportion of involutions in the group $G$.
Note that $\alpha(G)\in (0,1]$.

\begin{prop}			\label{prop:abelian}
	If $G$ is a finite abelian group, then $J(G)$ is an elementary abelian subgroup of $G$, and $j(G)$ is a power of $2$, dividing $|G|$.
\end{prop}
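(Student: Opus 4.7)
The plan is to verify the claim in three small, essentially routine, steps: that $J(G)$ is a subgroup, that it has exponent dividing $2$, and that its order is a power of $2$ dividing $|G|$.

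First I would show $J(G)$ is a subgroup of $G$. Since $1 \in J(G)$ and inverses of involutions are involutions, only closure requires an argument, and this is exactly where commutativity enters: if $x,y \in J(G)$, then $(xy)^2 = xyxy = x^2 y^2 = 1$, using $xy = yx$. Thus $xy \in J(G)$. (Note that without the abelian hypothesis this step fails, which is the whole point of the paper.)

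Second, since every element of $J(G)$ satisfies $x^2 = 1$ by definition, $J(G)$ is a group of exponent $\le 2$, hence abelian of exponent dividing $2$, i.e.\ elementary abelian as a $2$-group.

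Third, a finite elementary abelian $2$-group is a vector space over $\mathbb{F}_2$, so its cardinality is $2^r$ for some $r \ge 0$. That $j(G) = |J(G)|$ divides $|G|$ is then immediate from Lagrange's theorem applied to the subgroup $J(G) \le G$. No step here looks like a real obstacle; the only subtlety worth flagging is that closure under multiplication genuinely requires abelianness, and one should make sure to invoke it explicitly. (An alternative, slightly more structural, route is to apply the fundamental theorem of finite abelian groups, write $G \cong \prod_i \mathbb{Z}/n_i\mathbb{Z}$, and observe directly that $J(G) \cong \prod_i J(\mathbb{Z}/n_i\mathbb{Z})$, with each factor of order $1$ or $2$; but the three-line argument above is cleaner.)
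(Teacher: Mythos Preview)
Your proposal is correct and follows essentially the same route as the paper: verify that $J(G)$ is closed under inversion in general and under products when $G$ is abelian, conclude it is a subgroup of exponent at most $2$, and deduce that $j(G)$ is a power of $2$ dividing $|G|$. The paper's proof is terser but identical in content.
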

\begin{proof}
One easily verifies that $J(G)$ is closed under inversion in general and under the group operation when $G$ is abelian. It is then a subgroup. Since every element has order $2$, it is an elementary abelian $2$-group, and its order, $j(G)$ is therefore a power of $2$ dividing $|G|$.
\end{proof}

\begin{cor}
If $G$ is a finite abelian group and $\alpha(G)>1/2$, then $G$ is an elementary abelian $2$-group.\qed
\end{cor}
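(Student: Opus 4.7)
The plan is very short because Proposition~\ref{prop:abelian} does almost all the work. Since $G$ is abelian, that proposition tells us $J(G)$ is a subgroup of $G$, so by Lagrange's theorem the index $[G:J(G)]$ is a positive integer. But $[G:J(G)]= |G|/j(G) = 1/\alpha(G)$, so $1/\alpha(G)$ is a positive integer.

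The hypothesis $\alpha(G)>1/2$ then forces $1/\alpha(G)<2$, hence $1/\alpha(G)=1$; equivalently $J(G)=G$. This means every element of $G$ squares to the identity, and so $G$ is an elementary abelian $2$-group, as claimed.

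There is really no obstacle here: the only substantive content is the observation that for abelian $G$ the invariant $\alpha(G)$ is forced to be of the form $1/n$ with $n\in\mathbb{Z}_{>0}$, which is immediate from Proposition~\ref{prop:abelian} together with Lagrange. The role of the threshold $1/2$ is simply to exclude $n\ge 2$. I would write the proof in two or three sentences.
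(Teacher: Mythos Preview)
Your argument is correct and is exactly the intended one: the paper simply appends a \qed, treating the corollary as immediate from Proposition~\ref{prop:abelian} via Lagrange, which is precisely the two-line deduction you give.
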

In general, of course, $J(G)$ is not a subgroup of $G$. Dihedral groups form the most important class of non-abelian groups with many involutions.
\begin{example}
If $G=D_{2n}$, the dihedral group of order $2n$, then
\[
j(G)=
\begin{cases}
n+1 &\text{ if } n \text{ is odd}\\
n+2 &\text{ if } n \text{ is even}
\end{cases}.
\]
\end{example}
In particular 
\[\alpha(D_{2n})=
\begin{cases}
\frac{1}{2}+\frac{1}{2n} &\text{ if } n \text{ is odd}\\
\frac{1}{2}+\frac{1}{n} &\text{ if } n \text{ is even}
\end{cases}
\]
and $\alpha(D_{2n})\le 3/4$, unless $n=2$ and the dihedral group is actually elementary abelian.

\section{Preliminary Results}
We record some basic, useful, facts about counting involutions in finite groups.
\begin{lemma}
If $G=H\times K$, then $J(G)=J(H)\times J(K)$, $j(G)=j(H)\times j(K)$, and $\alpha(G)=\alpha(H)\times \alpha(K)$.
\end{lemma}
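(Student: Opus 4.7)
The plan is to reduce everything to the coordinatewise description of the group operation on $H\times K$. The starting point is the observation that for $(h,k)\in H\times K$, one has $(h,k)^{2}=(h^{2},k^{2})$, and this equals the identity $(1_{H},1_{K})$ if and only if $h^{2}=1_{H}$ and $k^{2}=1_{K}$. In other words, $(h,k)\in J(G)$ precisely when $h\in J(H)$ and $k\in J(K)$, which is exactly the set-theoretic equality $J(G)=J(H)\times J(K)$.

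From here the remaining two assertions are bookkeeping. Taking cardinalities of both sides of $J(G)=J(H)\times J(K)$ and using the fact that the cardinality of a Cartesian product is the product of the cardinalities yields $j(G)=j(H)\,j(K)$. Dividing this identity by $|G|=|H|\,|K|$ and unwinding the definition $\alpha(X)=j(X)/|X|$ immediately gives $\alpha(G)=\alpha(H)\,\alpha(K)$.

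I do not expect any real obstacle; the statement is a direct consequence of the componentwise squaring formula together with the multiplicativity of cardinality for Cartesian products. The only thing to be a little careful about is distinguishing the equality of sets (first assertion) from the equality of cardinalities (second assertion), but these follow from one another by simply counting.
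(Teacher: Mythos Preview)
Your proof is correct and follows exactly the same approach as the paper: both rest on the observation that $(h,k)$ is an involution in $H\times K$ if and only if $h$ and $k$ are involutions in $H$ and $K$ respectively. The paper's proof is simply a one-line version of what you wrote.
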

\begin{proof}
One simply observes that a pair $(h,k)$ in $G$ is an involution if and only if both $h$ and $k$ are involutions.
\end{proof}

\begin{lemma}
	If $G$ is a finite group with a normal subgroup $H$, then $j(G) \leq |H| \times j(G/H)$, and
	$\alpha(G)\le\alpha(G/H)$.
\end{lemma}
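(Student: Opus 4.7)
The plan is to observe that the quotient map $q\colon G\to G/H$ sends involutions to involutions (including the identity), since $g^2=1$ in $G$ implies $(gH)^2=H$ in $G/H$. Hence $J(G)\subseteq q^{-1}(J(G/H))$.

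Next, I would use the fact that the fibers of $q$ all have size $|H|$, so that $|q^{-1}(J(G/H))| = |H|\cdot j(G/H)$. Combining with the previous inclusion yields the first inequality $j(G)\le |H|\cdot j(G/H)$. Note this bound can be strict, since a coset $gH$ with $(gH)^2=H$ need not consist entirely of involutions; in fact it usually will not.

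For the second inequality, I would just divide both sides of $j(G)\le |H|\cdot j(G/H)$ by $|G|=|H|\cdot|G/H|$, giving
\[
\alpha(G)=\frac{j(G)}{|G|}\le\frac{|H|\cdot j(G/H)}{|H|\cdot |G/H|}=\frac{j(G/H)}{|G/H|}=\alpha(G/H).
\]

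There is really no obstacle here; the only thing to be careful about is not to conflate ``involution'' in the sense of this paper (which includes the identity) with the more restrictive notion, so that the preimage statement $J(G)\subseteq q^{-1}(J(G/H))$ holds on the nose.
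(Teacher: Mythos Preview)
Your proof is correct and follows essentially the same approach as the paper's: observe that involutions of $G$ map to involutions of $G/H$, bound each fiber by $|H|$, and then divide through by $|G|$ to obtain the inequality for $\alpha$. The only difference is that you spell out the fiber-counting and the $\alpha$-inequality explicitly, whereas the paper condenses this into a single sentence.
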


\begin{proof}
	Clearly each involution of $G$ maps to an involution (perhaps trivial) in $G/H$, and over any involution of $G/H$ there are at most $|H|$ involutions in $G$. The result follows.
\end{proof}

\begin{lemma}\label{lemma:centralsubgroup}
		If $G$ is a finite group with a  central subgroup $H$, then $j(G) \leq j(G/H)j(H)$, and
	$\alpha(G)\le \alpha(G/H)\alpha(H)$.
\end{lemma}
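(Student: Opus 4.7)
The plan is to bound the number of involutions in $G$ by counting, for each coset $gH$, the number of involutions it contains, and then summing only over those cosets that yield involutions in $G/H$.

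First I would observe that if $x\in G$ is an involution, then the coset $xH$ is an involution in $G/H$. Hence the involutions of $G$ partition into subsets indexed by the at most $j(G/H)$ cosets $gH$ that satisfy $(gH)^2 = H$, equivalently $g^2\in H$. So it suffices to prove that each such coset contributes at most $j(H)$ involutions to $G$.

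Next I would fix such a coset $gH$ with $g^2\in H$ and parametrize its elements as $gh$ for $h\in H$. Using centrality of $H$, I would compute
\[
(gh)^2 \;=\; ghgh \;=\; g^2 h^2,
\]
so $gh$ is an involution if and only if $h^2 = g^{-2}$. The set of solutions $h\in H$ to $h^2 = g^{-2}$ is either empty or a coset of $J(H)$ in $H$ (since $J(H)$ is a subgroup of the abelian group $H$ by Proposition \ref{prop:abelian}: if $h_0^2 = g^{-2}$, then the other solutions are exactly the $h_0 k$ with $k \in J(H)$). In either case, the coset $gH$ contains at most $j(H)$ involutions.

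Summing over the $j(G/H)$ involution-cosets then gives $j(G)\le j(G/H)\,j(H)$, and dividing by $|G|=|G/H|\cdot |H|$ yields $\alpha(G)\le\alpha(G/H)\alpha(H)$. The one step that needs care is the identity $(gh)^2 = g^2h^2$, which requires $H$ to be central rather than merely normal; everything else is a routine count.
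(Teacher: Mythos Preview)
Your proof is correct and follows essentially the same approach as the paper: both count involutions fiber-by-fiber over the quotient map $G\to G/H$, using centrality of $H$ to compute $(gh)^2$. The only cosmetic difference is that the paper picks an involution $x$ as the coset representative (so $(xh)^2=h^2$ and the involutions in the fiber are exactly $xJ(H)$), whereas you allow an arbitrary representative $g$ with $g^2\in H$ and obtain a coset of $J(H)$ as the solution set of $h^2=g^{-2}$; the bound is the same either way.
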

\begin{proof}
	As before, each involution $x\in G$ maps to an involution $\bar{x}\in G/H$. Over the involution $\bar{x}$ of $G/H$ are the elements of the form $xh$, $h\in H$. Since $H$ is central, an element of the form $xh$ is an involution if and only if $h$ is an involution. The result follows.
\end{proof}

\begin{lemma}
If $G$ is a group expressed as a semidirect product $NQ\cong N\rtimes Q$, then
\[
J(G)=\{nq: n\in N, q\in Q, q^{2}=1, qnq=n^{-1}\}
\]
\end{lemma}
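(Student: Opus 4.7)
The plan is a direct computation using the defining property of the semidirect product. Every element of $G = N \rtimes Q$ has a unique expression as $nq$ with $n \in N$, $q \in Q$, so it suffices to determine exactly which such products square to the identity.

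First I would compute
\[
(nq)^{2} = nqnq = n(qnq^{-1})q^{2}.
\]
Because $N$ is normal in $G$, the element $n(qnq^{-1})$ lies in $N$, while $q^{2}$ lies in $Q$. Thus the right-hand side is already displayed in the canonical $N$--$Q$ form. Using uniqueness of the decomposition (equivalently, $N \cap Q = \{1\}$), the equation $(nq)^{2}=1$ forces both factors to be trivial, giving simultaneously $q^{2}=1$ and $n(qnq^{-1}) = 1$, i.e.\ $qnq^{-1}=n^{-1}$.

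Finally I would observe that once $q^{2}=1$ we have $q^{-1}=q$, so the conjugation condition $qnq^{-1}=n^{-1}$ becomes $qnq=n^{-1}$, yielding exactly the stated set. Conversely, any $nq$ satisfying these two conditions clearly gives $(nq)^{2}=1$ by reversing the computation, so the containment is an equality.

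There is no real obstacle here; the only point requiring a bit of care is invoking the uniqueness of the $N$--$Q$ decomposition to split the equation $n(qnq^{-1}) q^{2} = 1$ into its $N$-part and its $Q$-part, rather than attempting to manipulate the equation $(nq)^{2}=1$ directly.
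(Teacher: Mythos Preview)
Your argument is correct and essentially the same direct computation as the paper's; the only cosmetic difference is that the paper obtains $q^{2}=1$ by passing to the quotient $G/N\cong Q$ (the image of an involution is an involution) rather than by invoking uniqueness of the $N$--$Q$ decomposition, and then runs the same calculation $(nq)^{2}=nqnq^{-1}=nn^{-1}=1$.
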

\begin{proof}
The proof is an easy calculation: If $q^{2}=1$ and $q$ inverts $n$, then $(nq)^{2}=nqnq=nqnq^{-1}=nn^{-1}=1$. Conversely, if $nq$ is an involution, then so is its image $q$ in the quotient group $Q$. Therefore $1=(nq)^{2}=nqnq^{-1}$, implying that $qnq^{-1}=n^{-1}$. 
\end{proof}

Note that $G=N$ is the disjoint union of the cosets $Nq$, $q\in Q$, and a coset $Nq$ contains involutions if and only if $q$ is an involution, and the involutions in $Nq$ are in one-to-one correspondence with the elements of $N$ that are inverted by the action of $q$ on $N$ by conjugation.

\begin{prop}
If $G$ is a finite group, $S< G$ a Sylow $2$-subgroup with normalizer $N=N_{G}(S)$, then $\alpha(G)\le|S|/|N|$.
\end{prop}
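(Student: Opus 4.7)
The plan is to bound $j(G)$ by a straightforward union bound over Sylow $2$-subgroups. The key observation is that every involution $x\in G$ generates a cyclic $2$-subgroup $\langle x\rangle$, and by the Sylow theorems every $2$-subgroup of $G$ is contained in some Sylow $2$-subgroup. Hence the set $J(G)$ is contained in the union $\bigcup_P P$, where $P$ ranges over all Sylow $2$-subgroups of $G$.

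Next I would invoke Sylow's theorem to count these subgroups: the Sylow $2$-subgroups of $G$ form a single conjugacy class, and their number equals the index $[G:N]$ of the normalizer $N=N_G(S)$. Since each has exactly $|S|$ elements, the union bound gives
\[
j(G) \;\le\; [G:N]\cdot|S| \;=\; \frac{|G|\cdot|S|}{|N|}.
\]

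Dividing by $|G|$ then yields $\alpha(G) = j(G)/|G| \le |S|/|N|$, which is the claimed inequality.

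There is no real obstacle to overcome here; the argument is an elementary counting bound resting entirely on standard Sylow theory. The estimate is obviously quite crude, in that it ignores both the overlap between distinct Sylow $2$-subgroups and the fact that non-abelian Sylow $2$-subgroups need not consist entirely of involutions, but these losses are irrelevant for the stated proposition.
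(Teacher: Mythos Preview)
Your proof is correct and follows essentially the same approach as the paper: cover $J(G)$ by the union of the $[G:N]$ Sylow $2$-subgroups and apply the union bound. The paper records the marginally sharper intermediate inequality $j(G)\le [G:N](|S|-1)+1$ (separating out the identity, which lies in every Sylow subgroup) before relaxing it to $[G:N]\cdot|S|$, but this makes no difference to the conclusion.
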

\begin{proof}
Every involution lies in some Sylow $2$-subgroup and all Sylow $2$-subgroups are conjugate. Therefore $J(G)\subset \bigcup_{g\in G}gSg^{-1}$. We need only take the union over a set of coset representatives of $N$ in $G$. It follows that
\[
j(G)\le \left|G/N\right|\left(|S|-1\right)+1\le |G/N|\times |S|=|G|\, |S|/|N|
\]
Dividing through by $|G|$, the result follows.
\end{proof}

\begin{cor}[\cite{Edmonds2009}, Corollary 4.4]\label{cor:normalizer}
If $G$ is a finite group with $\alpha(G)>1/2$ and $S$ is a Sylow $2$-subgroup, then $N_{G}(S)=S$.\qed
\end{cor}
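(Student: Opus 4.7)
The plan is to derive this directly from the preceding Proposition, so there is almost no work to do beyond an index argument. First I would invoke the inequality $\alpha(G)\le |S|/|N|$ guaranteed by the Proposition. Combining this with the hypothesis $\alpha(G)>1/2$ yields $|S|/|N| > 1/2$, equivalently $|N| < 2|S|$.

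Next I would use the fact that $S$ is a subgroup of $N=N_G(S)$, so $[N:S]$ is a positive integer. The strict inequality $[N:S] < 2$ forces $[N:S]=1$, which gives $N_G(S)=S$ as required.

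There is really no main obstacle here: the substance is buried in the Proposition, which bounds $\alpha(G)$ in terms of the index of the Sylow normalizer. The only thing to watch is that the bound in the Proposition is non-strict, while the hypothesis is strict, so the resulting inequality $|N|<2|S|$ is strict and the integrality of $[N:S]$ closes the argument cleanly. No further case analysis or auxiliary lemmas are needed.
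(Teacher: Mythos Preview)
Your proposal is correct and is exactly the intended argument: the paper presents this corollary with no written proof (just a \qed), since it follows immediately from the Proposition's bound $\alpha(G)\le |S|/|N|$ together with the integrality of $[N:S]$, precisely as you outline.
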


\begin{prop}\label{prop:center}
	If $G$ is a finite group such that $j(G) > |G|/2$, then the center $Z(G)$ is an  elementary abelian 2-group.
\end{prop}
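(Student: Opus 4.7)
The plan is to show that every element $z$ of $Z(G)$ must have order at most $2$, since $Z(G)$ is automatically abelian. So I would argue by contradiction, assuming there exists $z \in Z(G)$ with $z^2 \neq 1$ and producing a subset of $G$ disjoint from $J(G)$ of the same cardinality as $J(G)$, forcing $2 j(G) \le |G|$.

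The key observation, and really the only move in the proof, is the following computation: for any involution $x \in J(G)$, since $z$ is central,
\[
(xz)^2 = x z x z = x^2 z^2 = z^2.
\]
Thus $xz \in J(G)$ if and only if $z^2 = 1$. Under the assumption $z^2 \neq 1$, this says the translate $z \cdot J(G)$ is entirely contained in $G \setminus J(G)$. Since left multiplication by $z$ is a bijection $G \to G$, we have $|z \cdot J(G)| = j(G)$, and therefore
\[
2 j(G) = |J(G)| + |z \cdot J(G)| \le |G|,
\]
contradicting the hypothesis $j(G) > |G|/2$.

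Hence every $z \in Z(G)$ satisfies $z^2 = 1$. Combined with the fact that $Z(G)$ is abelian, Proposition~\ref{prop:abelian} (or just the direct observation) then gives that $Z(G)$ is an elementary abelian $2$-group. There is no real obstacle here; the whole argument rests on noticing that multiplication by a central non-involution acts as a fixed-point-free involution-to-noninvolution map, which is the standard trick for ``more than half'' counting hypotheses.
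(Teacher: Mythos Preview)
Your proof is correct and is in fact more direct than the paper's. The paper first invokes Sylow theory (via Corollary~\ref{cor:normalizer}) to conclude that $Z(G)$ lies inside a Sylow $2$-subgroup and hence is a $2$-group, and then applies the central-subgroup inequality $j(G)\le j(G/Z)j(Z)$ (Lemma~\ref{lemma:centralsubgroup}) to force $|Z|=j(Z)$. Your argument bypasses both steps: the single computation $(xz)^2=z^2$ for central $z$ and $x\in J(G)$ immediately gives that $zJ(G)$ and $J(G)$ are disjoint whenever $z^2\ne 1$, and the counting contradiction follows. What the paper's route buys is that it situates the result within the general machinery already developed (and in particular illustrates Lemma~\ref{lemma:centralsubgroup} in action); what your route buys is self-containment and economy---you need no Sylow theory, no quotient groups, and no prior lemmas beyond the definition of the center.
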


The example of a dihedral group $D_{2n}$, $n$ odd, shows that the center $Z(G)$ of such a group with $\alpha > 1/2$ could be trivial.

\begin{proof}
	Let $Z = Z(G)$ and let $S$ be a Sylow 2-subgroup of $G$.
	By Corollary \ref{cor:normalizer}, $N_{G}(S) = S$, so
		$Z \leqslant S$. Therefore $Z$ is a 2-group; let $|Z| = 2^{a}$
		 and $j(Z) = 2^{b}$ for some $0 \leq b \leq a$.
	Now we have
	\begin{align*}
		{|G|}/{2} & < j(G)\\
				 & \leq j(G/Z)j(Z) \quad\quad \text{(by Lemma \ref{lemma:centralsubgroup})}\\
				 & = j(G/Z)2^{b}\\
				 & \leq \left({|G|}/{|Z|}\right)2^{b}.
	\end{align*}
	It follows that  $|Z| < 2^{b+1}$. But $Z$ is a 2-group of order at least $j(Z) = 2^{b}$, so $|Z| = j(Z)$ and $Z = J(Z)$. Since every element of $Z$ is an involution, the result follows.
\end{proof}

Note that the inequality in the hypothesis of the proposition must be strict:\\
Consider $G = C_{4} \times (C_{2})^{n-2}$. The order of $G$ is $2^{n}$, and $j(G) = 2^{n-1} = |G|/2$, but $Z(G) = G$, which is not an elementary abelian 2-group.

We recall a crucial result for handling non-2-groups, which provided the starting point for the present investigation.

\begin{thm}[Edmonds \cite{Edmonds2009}, Theorem 4.1]\label{thm:edmonds}
	If $|G| = 2^{n}m$, $m$ odd and $n\ge 1$, then $j(G) \leq 2^{n-1}(m+1)$.\qed
\end{thm}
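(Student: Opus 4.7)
The plan is to induct on $|G|$, dispatching the base cases $m = 1$ and $n = 1$ directly and then splitting the inductive step according to whether $G$ has a nontrivial normal $2$-subgroup. If $m = 1$, then $G$ is a $2$-group and $j(G) \le |G| = 2^n = 2^{n-1}(m+1)$. If $n = 1$, each Sylow $2$-subgroup has order $2$ with a unique nonidentity involution, and since every nonidentity involution lies in some such subgroup, $j(G) - 1$ is bounded by the number of Sylow $2$-subgroups, which divides $m$.

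Now assume $n \ge 2$ and $m \ge 3$. If $G$ has a nontrivial normal $2$-subgroup $N$ of order $2^r$ with $r < n$, then the normal-subgroup inequality $j(G) \le |N| \cdot j(G/N)$ combined with the inductive hypothesis applied to $G/N$ yields
\[
j(G) \le 2^r \cdot 2^{n-r-1}(m+1) = 2^{n-1}(m+1).
\]
If $r = n$, then $G/N$ has odd order, so $j(G/N) = 1$ and $j(G) \le 2^n \le 2^{n-1}(m+1)$. Thus we may assume the maximal normal $2$-subgroup of $G$ is trivial.

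In this remaining case the plan is coset-counting. Fix a Sylow $2$-subgroup $S$ and partition $G$ into its $m$ right cosets. The coset $S$ itself contributes at most $j(S) \le 2^n$ involutions. For a non-trivial coset $Sg$, if it contains no involution then it contributes $0$; otherwise we may replace $g$ by an involution lying in the coset and write $Sg = Sy_0$ with $y_0$ an involution. A direct computation shows that the involutions in $Sy_0$ correspond bijectively, via $sy_0 \mapsto s$, to the set $I_{y_0} = \{s \in S : y_0 s y_0 = s^{-1}\}$. The crux is to show $|I_{y_0}| \le |S|/2$; granted this, summation over the cosets gives
\[
j(G) \le 2^n + (m-1)\cdot 2^{n-1} = 2^{n-1}(m+1).
\]

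The expected main obstacle is the bound $|I_{y_0}| \le |S|/2$. When $S$ is abelian this is clean: $I_{y_0}$ is then a subgroup of $S$ that is inverted by $y_0$, so $\langle I_{y_0}, y_0\rangle$ is a $2$-group of order $2\,|I_{y_0}|$ sitting inside $G$, and the Sylow maximality of $S$ forces $|I_{y_0}| \le |S|/2$. In the non-abelian Sylow case $I_{y_0}$ is only a subset of $S$ and this subgroup argument breaks down; I expect this step to require either a Baer--Suzuki-type reduction, or a case analysis based on the structure of the involutive automorphism of $S$ induced by conjugation by $y_0$, ensuring that the inverted elements never account for more than half of $S$.
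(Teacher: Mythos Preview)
The paper does not actually prove this theorem; it is quoted from \cite{Edmonds2009} and stated without argument. So there is no in-paper proof to compare against, and I can only assess your proposal on its own merits.

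Your outline is sound, but it carries the gap you yourself flag: you have not established $|I_{y_0}| \le |S|/2$ when $S$ is non-abelian. This gap closes with a short observation that avoids Baer--Suzuki and case analysis alike. If $s \in I_{y_0}$ then $y_0 s y_0 = s^{-1} \in S$, so $s \in y_0 S y_0$; hence $I_{y_0} \subseteq T := S \cap y_0 S y_0$. Now $T$ is a genuine subgroup of $S$, and $y_0$ normalizes it (since $y_0 T y_0 = y_0 S y_0 \cap S = T$), so $\langle T, y_0\rangle$ is a $2$-group of order $2|T|$ (as $y_0 \notin S \supseteq T$). Sylow maximality of $S$ forces $2|T| \le |S|$, whence $|I_{y_0}| \le |T| \le |S|/2$. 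This is exactly your abelian-case trick, applied to the subgroup $T$ rather than to the set $I_{y_0}$ itself.

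Once this bound is in place, your inductive reduction through a normal $2$-subgroup becomes superfluous: the coset count nowhere uses the hypothesis $O_2(G)=1$, and it already covers $m=1$ (a single coset $S$) and $n=1$ (each nontrivial coset carries at most $2^{0}=1$ involution). The entire proof collapses to the coset partition together with the intersection bound above.
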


\begin{cor}\label{cor:twothirds}
If $|G| = 2^{n}m$, $m$ odd and $n\ge 1$, then $\alpha(G)\le \frac{m+1}{2m}=\frac{1}{2}+\frac{1}{2m}$. If $m>1$, then $\alpha(G)\le \frac{2}{3}$.\qed
\end{cor}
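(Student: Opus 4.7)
The plan is to derive both inequalities as direct algebraic consequences of Theorem \ref{thm:edmonds}; there is essentially no obstacle here beyond a short computation, so I would keep the proof brief.

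First, I would invoke Theorem \ref{thm:edmonds} to obtain $j(G) \leq 2^{n-1}(m+1)$. Dividing both sides by $|G| = 2^{n}m$ immediately gives
\[
\alpha(G) = \frac{j(G)}{|G|} \leq \frac{2^{n-1}(m+1)}{2^{n}m} = \frac{m+1}{2m},
\]
and splitting the last fraction as $\frac{m}{2m} + \frac{1}{2m}$ yields the stated form $\frac{1}{2} + \frac{1}{2m}$.

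For the second assertion, I would note that $\frac{m+1}{2m} = \frac{1}{2} + \frac{1}{2m}$ is strictly decreasing in $m$. Since $m$ is odd and $m > 1$, we have $m \geq 3$, so
\[
\alpha(G) \leq \frac{1}{2} + \frac{1}{2m} \leq \frac{1}{2} + \frac{1}{6} = \frac{2}{3},
\]
which completes the proof.
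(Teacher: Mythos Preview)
Your proof is correct and matches the paper's approach: the corollary is stated with a \qed\ and no written proof, indicating it follows immediately from Theorem~\ref{thm:edmonds} by the very computation you give. The observation that $m>1$ odd forces $m\ge 3$ is exactly the point needed for the second inequality.
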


Edmonds \cite{Edmonds2009} also proved that a finite group with $j(G) =2^{n-1}(m+1)$ is the direct product of ${C_{2}}^{n-1}$ and a group of order $2m$ of dihedral type, i.e., a split extension of an abelian group of order $m$ by the cyclic group of order 2 acting by inversion.

\section{Groups with $\alpha > 3/4$}
Here we present our characterization of groups in which more than $3/4$ of the elements are involutions.
\begin{thm}			\label{thm:main}
	If $G$ is a finite group and $\alpha(G) > 3/4$, then $G$ is an elementary abelian
	$2$-group.
\end{thm}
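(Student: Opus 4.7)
The plan is to reduce first to $2$-groups, then via induction on $|G|$ to the case where $G$ has nilpotency class at most two with $Z(G)$ and $G/Z(G)$ both elementary abelian, and finally to settle the resulting counting problem for quadratic forms over $\mathbb{F}_2$.

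Since $\alpha(G) > 3/4 > 2/3$, Corollary \ref{cor:twothirds} immediately forces $|G|$ to be a power of $2$. Next I would induct on $|G|$. The abelian case follows from the corollary after Proposition \ref{prop:abelian}; otherwise, set $Z := Z(G) < G$. Proposition \ref{prop:center} shows $Z$ is elementary abelian, so $\alpha(Z) = 1$, and then Lemma \ref{lemma:centralsubgroup} gives $\alpha(G/Z) \ge \alpha(G) > 3/4$. By induction $G/Z$ is elementary abelian, so $G$ is nilpotent of class at most two with both $G/Z$ and $Z$ elementary abelian.

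Now view $V := G/Z$ and $W := Z$ as $\mathbb{F}_2$-vector spaces. The squaring map $\sigma \colon V \to W$, $\sigma(\bar x) := x^2$, is well defined because $(xh)^2 = x^2$ whenever $h \in Z$ is an involution. The commutator descends to a symmetric bilinear form $\beta \colon V \times V \to W$ by class-two nilpotency, and the identity $(xy)^2 = x^2 y^2 [y,x]$ reads $\sigma(u+v) = \sigma(u) + \sigma(v) + \beta(u,v)$, so $\sigma$ is a $W$-valued quadratic map with polarization $\beta$. Each coset of $Z$ lies wholly inside or wholly outside $J(G)$ according as $\sigma(\bar x) = 0$ or not, so the hypothesis $\alpha(G) > 3/4$ becomes $|\sigma^{-1}(0)| > 3|V|/4$; the goal is to deduce $\sigma \equiv 0$, which makes every element an involution and hence $G$ elementary abelian.

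The only real obstacle is the following scalar counting fact: any nonzero $\mathbb{F}_2$-valued quadratic form on a finite-dimensional $\mathbb{F}_2$-space has at most $3/4$ of its values equal to $0$. Granting this, if $\sigma \ne 0$ then some linear $\lambda \colon W \to \mathbb{F}_2$ makes $\lambda\sigma$ a nonzero scalar quadratic form on $V$, and $\sigma^{-1}(0) \subseteq (\lambda\sigma)^{-1}(0)$ yields $|\sigma^{-1}(0)| \le 3|V|/4$, a contradiction. To prove the fact I would take any nonzero quadratic form $q$ on a finite $\mathbb{F}_2$-space $U$ and analyze the radical $R$ of its polarization: if $q|_R \ne 0$, any $r_0 \in R$ with $q(r_0) = 1$ makes $u \mapsto u + r_0$ swap $q^{-1}(0)$ and $q^{-1}(1)$, giving the proportion exactly $1/2$; if $q|_R = 0$, then $q$ descends to a nondegenerate quadratic form on $U/R \cong \mathbb{F}_2^{2n}$ with $n \ge 1$, whose zero-count is the classical $2^{n-1}(2^n \pm 1)$ and whose proportion $(2^n \pm 1)/2^{n+1}$ is at most $3/4$, attained only for the hyperbolic form at $n = 1$ (precisely the $D_8$ boundary case). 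Either way $|q^{-1}(0)| \le 3|U|/4$, completing the argument.
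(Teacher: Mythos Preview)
Your proof is correct but takes a genuinely different route from the paper's. The paper inducts by quotienting out a \emph{single} central involution $a$, so that $G/\langle a\rangle$ is elementary abelian by induction; it then supposes an element $x$ of order $4$ exists, observes that $\langle x\rangle\trianglelefteq G$ with $[G:C(x)]=2$, and counts involutions to get $j(C(x))>|C(x)|/2$, whence Proposition~\ref{prop:center} forces $Z(C(x))$ to be elementary abelian---contradicting $x\in Z(C(x))$. You instead quotient by the full center, recast the squaring map as a $W$-valued quadratic map on $V=G/Z$, and reduce via a linear functional to the classical zero-count for nondegenerate $\mathbb{F}_2$-quadratic forms. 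The paper's argument is shorter and entirely self-contained, needing no quadratic-form input; your argument is more structural, makes the sharpness of the bound $3/4$ transparent (it is exactly the hyperbolic plane at $n=1$, which is the $D_8$ boundary case), and sets up a framework that adapts naturally to the $\alpha=3/4$ classification treated in the following section.
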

\begin{proof}
	By Corollary \ref{cor:twothirds}, $G$ must be 
	of order $2^{n}$ for some $n$. We proceed by induction on $n$.
	We may suppose $n > 3$, as the case when $n \leq 3$ follows easily by inspection of the lists of groups of small order, at most $8$.
	Because $G$ is a $2$-group, its center is nontrivial, and so contains an involution.
	Let $a$ be a central involution in $G$. By Lemma \ref{lemma:centralsubgroup}, we have
	$j(G) \leq j(G/\!\zncyc{a})j(\zncyc{a})$; hence 
		$$j(G/\!\zncyc{a}) \geq {j(G)}/{2} > {3}|G|/8 = {3}|G/\!\zncyc{a}\!|/4$$
	Therefore, by the inductive hypothesis, $G/\!\zncyc{a}$ is an elementary abelian $2$-group
	of order $2^{n-1}$. So we have a central extension
		$$C_{2} \rightarrowtail G \twoheadrightarrow (C_{2})^{n-1}$$
	which we must show to be a direct product. It is a direct product if and only $G$ is elementary abelian.	
	
	\indent If $G$ is not an elementary abelian $2$-group, there must be an element of order 4.
	Suppose $G$ contains an element of order 4; call it $x$. Then, under the natural projection
	onto the quotient group $G/\!\zncyc{a}$, $\zncyc{x}$ is the 
	inverse image of a
	cyclic subgroup of order 2. Because that cyclic subgroup must be normal in the abelian quotient, 
	$\zncyc{x} \vartriangleleft G$.
	Consequently, conjugation by an element of $G$ must send $x$ to either $x$ or $x^{-1}$,
	since those are the only two candidates inside $\zncyc{x}$.
	In particular, the length of the orbit of $x$ under conjugation is at most 2.
	In fact, since by Proposition \ref{prop:center} there are no elements of order 4 in $Z(G)$, 
		$$|\cl(x)| = [G : C(x)] = 2,$$
	where $\cl(x)$ denotes the conjugacy class of $x$, and $C(x)$ denotes its centralizer.
	Now note that
		$$j(C(x)) = j(G) - j(G-C(x)) > {3}|G|/{4} - |G|/2 = |G|/4= |C(x)|/2.$$
	We can therefore apply Proposition \ref{prop:center} to $C(x)$.
	It follows that $Z(C(x))$ is an elementary abelian 2-group, contradicting the fact that
	 $x$, an element of order 4, is necessarily in the center of its own centralizer.
So $G$ contains no element of order 4 and is thus an elementary abelian 2-group,
	as required. This completes the inductive step and hence the proof of the theorem.
\end{proof}

The inequality in the hypothesis of the theorem is the best possible, as the results in the following section demonstrate.

\section{Groups with $\alpha=3/4$}
The simplest group with $\alpha=3/4$ is the dihedral group $D_{8}$ of order 8. We will show here that a finite group $G$ with $\alpha(G)=3/4$ is of the form $D_{8}\times {C_{2}}^{k}$.

\begin{lemma}\label{lemma:surjection}
Suppose $G$ is a finite group of order $2^{n}$ such that $\alpha(G)=3/4$, and suppose there is a surjection $\pi:G\to D_{8}$. Then $K=\ker \pi\cong {C_{2}}^{n-3}$, the surjection $\pi$ splits, so that $G$ is a semidirect product $K\rtimes D_{8}$, and the semidirect product is in fact a direct product.
\end{lemma}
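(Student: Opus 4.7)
The plan is to squeeze everything from the equality case of the lemma $j(G)\le|K|\cdot j(G/K)$. Since $|G|=2^n$ gives $|K|=2^{n-3}$ and $j(D_8)=6$, the hypothesis yields
\[
j(G)=\tfrac{3}{4}|G|=6\cdot 2^{n-3}=j(D_8)\cdot|K|,
\]
saturating the bound. Equality forces every coset $\pi^{-1}(\bar y)$ lying over an involution $\bar y\in D_8$ to consist entirely of involutions in $G$. Applying this with $\bar y=1$ gives $K=J(K)$, so $K$ is elementary abelian of order $2^{n-3}$, i.e.\ $K\cong C_2^{n-3}$.

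For the splitting, I would lift the standard generators of $D_8=\langle r,s\mid r^4=s^2=1,\ srs=r^{-1}\rangle$. Choose any $x\in\pi^{-1}(r)$; since $x$ projects to an element of order $4$, $x$ has order at least $4$, while $x^2\in\pi^{-1}(r^2)$ is forced to be an involution, so $x$ has order exactly $4$. Choose any $y\in\pi^{-1}(s)$, which by the previous paragraph is already an involution. The element $xy$ lies over the involution $rs$ in $D_8$, so $(xy)^2=1$, which rearranges to $yxy=x^{-1}$. Together with $x^4=y^2=1$ this shows that $H=\langle x,y\rangle$ is a quotient of $D_8$; since $\pi(H)=D_8$ we conclude $|H|=8$ and $\pi|_H$ is an isomorphism. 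Then $K\cap H=\{1\}$ and $|KH|=|G|$, so $G\cong K\rtimes H$.

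To upgrade to a direct product, I would check that $H$ centralizes $K$. For any involution $h\in H$ and any $k\in K$, the element $hk$ lies over an involution of $D_8$ and hence is itself an involution; expanding $(hk)^2=1$ and using $k^2=1$ gives $hkh=k$, so every involution of $H$ centralizes $K$. Because $D_8$ is generated by its involutions (for instance $r=s\cdot sr$), all of $H$ centralizes $K$, and the extension is direct.

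I expect the middle paragraph to be the delicate step: the key move is that the counting equality from the first paragraph applies equally well over the non-central involution $rs\in D_8$, and that is exactly what turns $(xy)^2=1$ into the dihedral relation $yxy=x^{-1}$. The rest of the argument is then a single application of the equality on one side and a one-line centralizer calculation on the other.
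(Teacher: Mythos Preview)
Your proof is correct and follows essentially the same route as the paper: saturate the inequality $j(G)\le |K|\cdot j(D_8)$ so that every fibre over an involution of $D_8$ consists entirely of involutions, lift generators of $D_8$ to obtain a splitting, and then use the same involution calculation on products $hk$ to see that the $D_8$-action on $K$ is trivial. The only cosmetic difference is in the middle step: the paper lifts two non-commuting \emph{reflections} $x,y$ and observes that $(xy)^2$, lying over the central involution, is itself an involution (so $xy$ has order~$4$), whereas you lift the rotation $r$ and one reflection $s$ and derive the dihedral relation from $(xy)^2=1$ over $rs$---the two verifications are interchangeable.
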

\begin{proof}
Each involution of $G$ maps to one of the six involutions of $D_{8}$. Over each one of these involutions there are at most $|K|=2^{n-3}$ involutions. In order to reach $\alpha(G)=3/4$ it is necessary that all elements in the preimage of any of the involutions of $D_{8}$ must be involutions.  We conclude that $K$ consists entirely of involutions, so that $K\cong {C_{2}}^{n-3}$. Choose two involutions $x,y\in G$, mapping to two non-commuting involutions in $D_{8}$, which necessarily generate $D_{8}$. Then in $G$ the subgroup $\left<x,y\right>$ is a dihedral group, which $\pi$ maps onto $D_{8}$. Now $(xy)^{2}$ maps to the central involution of $D_{8}$. Therefore $(xy)^{2}$ is also an involution in $G$. We conclude that $\left<x,y\right>$ defines a copy of $D_{8}$, expressing $G$ as a semidirect product $K\rtimes D_{8}$. Now an element $(a,b)$ of such a semidirect product is an involution if and only if $b$ is an involution and $b$ conjugates $a$ to its inverse. Since $K$ is an elementary abelian $2$-group, $a^{-1}=a$, so $(a,b)$ is an involution if and only if $b$ commutes with $a$. And for every involution $b\in D_{8}$ we must have $(a,b)$ an involution for all $a\in K$. We conclude that all involutions in $D_{8}$ act trivially on $K$. Since $D_{8}$ is generated by involutions, the entire group $D_{8}$ acts trivially on $K$ and the semidirect product is a direct product, as required.
\end{proof}

\begin{thm}
Let $G$ be a finite group such that $\alpha(G)=3/4$. Then $|G|=2^{n}$, $n\ge 3$, and $G\cong D_{8}\times {C_{2}}^{n-3}$.
\end{thm}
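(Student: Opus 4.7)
The plan is induction on $n$ with $|G|=2^{n}$, where the fact that $G$ must be a $2$-group comes from Corollary~\ref{cor:twothirds} applied to $\alpha(G)=3/4>2/3$. The base case $n=3$ is dispatched by inspecting the five groups of order $8$: only $D_{8}$ has six involutions. For $n\ge 4$, Proposition~\ref{prop:center} provides a central involution $a\in Z(G)$, and Lemma~\ref{lemma:centralsubgroup} gives $\alpha(G/\!\zncyc{a})\ge 3/4$.

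If $\alpha(G/\!\zncyc{a})=3/4$, the inductive hypothesis identifies $G/\!\zncyc{a}\cong D_{8}\times C_{2}^{n-4}$, so projecting onto the $D_{8}$ factor and composing with the quotient map produces a surjection $G\twoheadrightarrow D_{8}$, at which point Lemma~\ref{lemma:surjection} concludes. The substantive subcase is $\alpha(G/\!\zncyc{a})>3/4$: then Theorem~\ref{thm:main} forces $G/\!\zncyc{a}\cong C_{2}^{n-1}$, so $[G,G]\subseteq\zncyc{a}$ and $G$ has nilpotency class at most $2$. Because $j(G)=3\cdot 2^{n-2}$ is not a power of $2$, $G$ is nonabelian, whence $[G,G]=\zncyc{a}$.

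In this configuration the squaring map descends to a quadratic form $q\colon G/Z(G)\to \zncyc{a}\cong \mathbb{F}_{2}$ whose polar form is the commutator pairing; by definition of $Z(G)$ this pairing is nondegenerate, so $G/Z(G)\cong\mathbb{F}_{2}^{2s}$ and $|Z(G)|=2^{n-2s}$. Since $Z(G)$ is elementary abelian (Proposition~\ref{prop:center}), the identity $(gz)^{2}=g^{2}$ for $z\in Z(G)$ shows that the property of being an involution is constant on $Z(G)$-cosets, yielding $j(G)=|Z(G)|\cdot|q^{-1}(0)|$. Combining this with the standard count $|q^{-1}(0)|=2^{2s-1}\pm 2^{s-1}$ for a nondegenerate $\mathbb{F}_{2}$-quadratic form and setting the product equal to $3\cdot 2^{n-2}$ forces $s=1$ with the ``plus'' sign; hence $G/Z(G)\cong C_{2}^{2}$, $Z(G)\cong C_{2}^{n-2}$, and $q$ is hyperbolic.

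Lifting a hyperbolic basis of $q$ to involutions $x,y\in G$ with $[x,y]=a$ produces $H=\zncyc{x,y}\cong D_{8}$ satisfying $H\cap Z(G)=\zncyc{a}$ and $HZ(G)=G$; choosing any $\mathbb{F}_{2}$-vector-space complement $K$ to $\zncyc{a}$ in $Z(G)$ then gives $G=H\times K\cong D_{8}\times C_{2}^{n-3}$. The main obstacle is the quadratic-form bookkeeping in the elementary-abelian-quotient subcase, which is what pins down $|Z(G)|=2^{n-2}$ and produces the $D_{8}$ factor; the dihedral-quotient subcase reduces cleanly to Lemma~\ref{lemma:surjection}, and the final direct-product assembly is routine once $H$ and $K$ have been identified.
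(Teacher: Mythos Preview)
Your proof is correct, but the route through the difficult subcase is genuinely different from the paper's.

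Both arguments agree on the overall scaffolding: $G$ is a $2$-group by Corollary~\ref{cor:twothirds}, the base case $n=3$ is by inspection, and passing to a central quotient with $\alpha\ge 3/4$ sets up the dichotomy. When the quotient already has $\alpha=3/4$, both of you apply induction and then Lemma~\ref{lemma:surjection}. The divergence is in the remaining case, where the central quotient is elementary abelian and hence $[G,G]=\zncyc{a}$.

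The paper proceeds by locating a copy of $D_{8}=\zncyc{x,y}$ inside $G$, then choosing a complement $Z'$ to $\zncyc{a}$ inside $Z(G)$ and passing to $G/Z'$; if $Z'\ne 1$ this is a smaller nonabelian group with $\alpha=3/4$, so induction and Lemma~\ref{lemma:surjection} finish. The residual case $Z(G)=\zncyc{a}$ is then handled by an explicit coset count using the automorphism group of $D_{8}$, which forces $n=3$.

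You instead exploit the class-$2$ structure directly: the squaring map descends to a nondegenerate quadratic form $q$ on the $\mathbb{F}_{2}$-space $G/Z(G)$ with polar form the commutator pairing, and the standard zero-count $|q^{-1}(0)|=2^{2s-1}\pm 2^{s-1}$ combined with $j(G)=|Z(G)|\cdot|q^{-1}(0)|=3\cdot 2^{n-2}$ pins down $s=1$ and the hyperbolic type in one stroke. Lifting a hyperbolic pair gives the $D_{8}$ factor, and a vector-space complement in $Z(G)$ supplies the $C_{2}^{n-3}$ factor. This is the extraspecial-group viewpoint, and it is arguably cleaner: it determines $|Z(G)|=2^{n-2}$ and produces the direct-product decomposition without a second inductive pass or the $\mathrm{Aut}(D_{8})$ computation. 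The paper's argument, by contrast, stays closer to first principles and avoids invoking the classification of nondegenerate $\mathbb{F}_{2}$-quadratic forms.
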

\begin{proof}By Corollary \ref{cor:twothirds} it follows  that $G$ is a nonabelian $2$-group, of order $2^{n}$, say, where $n\ge 3$. We therefore proceed by induction on $n$. When $n=3$ the result follows from the elementary classification of finite groups of order 8.

Let $Z=Z(G)$ be the nontrivial center of $G$, which by Proposition \ref{prop:center} is known to be an elementary abelian 2-group ${C_{2}}^{r}$. Consider the quotient group $Q=G/Z$. By Lemma \ref{lemma:centralsubgroup} we know that $3|G|/4=j(G)\le j(Q)|Z|$. It follows that $j(Q)\ge 3|Q|/4$. If $j(Q)=3|Q|/4$, then by induction on order, we may assume that $Q\cong D_{8}\times {C_{2}}^{t}$, for some $t$. In this case $G$ clearly admits a surjection onto $D_{8}$, so Lemma \ref{lemma:surjection} implies that $G$ has the required properties.

If  $j(Q)\ne 3|Q|/4$, then  $j(Q)>3|Q|/4$, and $Q\cong {C_{2}}^{s}$ where $r+s=n$, by Theorem \ref{thm:main}. 
We have a central extension
\[
{{C_{2}}^{r}} \rightarrowtail G \stackrel{\pi}{\twoheadrightarrow} {C_{2}}^{s}
\]
where ${{C_{2}}^{r}}$ is the full center of $G$, and $\pi\colon G\to {C_{2}}^{s}$ is the projection map.

Over any involution in $Q$ there is either a full set of $2^{r}$ involutions (all obtained by multiplying one such involution by an element of $Z$) or there are no involutions. Thus $j(G)=|\pi (J(G))|\times 2^{r}$ and $|\pi (J(G))|=3\times 2^{s-2}$.

Since $G$ is non-abelian and generated by $J(G)$, there must be two non-commuting involutions  $x,y\in G$, which map nontrivially to $\bar{x},\bar{y}\in Q$, such that $(xy)^{2}\ne 1$. In particular, $\bar{x}\bar{y}\ne 1$. Then $a=(xy)^{2}$ is an element of $Z$. In particular $\bar{x}\bar{y}$ is an involution of $Q$ that is not the image of an involution of $G$. 

Then $\left<x,y\right>\cong D_{8}$. 
and  ${D_{8}}\cap Z=\left<a\right>$ for the central involution $a=(xy)^{2}$. Note that $a$ is also a commutator $[x,y]$. Extend $\{a\}$ to a basis $a,f_{2},\dots f_{r}$ of $Z$,  as a vector space over the field of two elements, and let $Z'=\left< f_{2},\dots f_{r}\right>$.

Suppose $Z'$ is nontrivial. As a subgroup of the center, $Z'$ is normal in $G$. Consider the quotient group $R=G/Z'$. Note that $R$ is not abelian since the commutator $a$ is not in $Z'$. Moreover, as before, $\alpha(R)\ge 3/4$. Since $R$ is not abelian, we cannot have $\alpha(R)>3/4$, by Theorem \ref{thm:main}. Therefore $\alpha(R)=3/4$. Since $Z'$ is nontrivial, $|R|<|G|$, so that by induction on order the group  $R$ can be expressed as a direct product of $D_{8}$ and an elementary abelian 2-group. In particular, $R$, and hence $G$, maps onto $D_{8}$. By Lemma \ref{lemma:surjection} $G$ itself can be expressed as a direct product of $D_{8}$ and an elementary abelian 2-group.

It remains to consider the case where $Z=\left<a\right>$, and $R=G/Z\cong {C_{2}}^{n-1}$. We will show that under this assumption we necessarily have $n=3$ and $G=D_{8}$. As above we have the two involutions $x,y\in G$ that generate a $D_{8}$ and whose images $\bar{x},\bar{y}\in R$ generate a summand ${C_{2}}^{2}$ of $R$. We aim to show that this $D_{8}$ is the whole group.

Now $D_{8}$ is normal in $G$ with quotient $S$ isomorphic to ${C_{2}}^{n-3}$. Let $\pi:G\to S$ be the quotient map. For any involution $t\in G$ such that $\pi(t)=\bar{t}$ is nontrivial in $S$, the subgroup $\left<x,y,t\right>$ in $G$ is a semidirect product $D_{8}\rtimes C_{2}$, where $t\in C_{2}$ acts on $D_{8}$ by conjugation.

 The automorphism group of $D_{8}$ is ``well-known.''  Compare Hall \cite{Hall1958}, Exercise 1, page 90. It is abstractly isomorphic to $D_{8}$, although, of course, not every automorphism is an inner automorphism. Among the 8 automorphisms there are 6 involutions. These involutions each invert at most 6 elements of $D_{8}$, since $D_{8}$ is nonabelian. Three of them, including the identity, invert 6 elements.

 It follows that an element of $S$ is hit by up to six involutions or by none. Now $\frac{3}{4}|G|=j(G)\le |\pi (J(G))|\times 6\le 6\times 2^{n-3}$. Therefore  the inequality is an equality and $\pi (J(G))=S$. That is, each element of $S$ is hit by exactly 6 involutions of $G$. 

We conclude that each such coset $\left<x,y\right>t$ contains exactly 6 involutions and that for each element of $S$ there is an involution $t$ that maps to it. A coset  $\left<x,y\right>t$ contains involutions if and only if $t$ is an involution. Therefore the set-theoretic difference $G-\left<x,y\right>$ consists entirely of involutions, each of which acts on $D_{8}$ as one of the involution automorphisms that inverts 6 elements.

In particular $G-D_{8}$ consists entirely of involutions. But then
\[
J(G)=J(D_{8})\cup (G-D_{8})
\]
and
\[
(3/4)\times 2^{n}=6+(2^{n}-8)
\]
It follows that $n=3$ and in this case we already know the result. This completes the proof.
 \end{proof}


\end{document}